\theoremstyle{plain}
\newtheorem{corollary}{Corollary}[section]
\newtheorem{theorem}{Theorem}[section]
\newtheorem{lemma}{Lemma}[section]
\newtheorem{proposition}{Proposition}[section]
\theoremstyle{definition}
\newtheorem{remark}{Remark}[section]
\newtheorem{example}{Example}[section]
\newtheorem{definition}{Definition}[section]
\newcommand{\Td}{{\rm Td}}
\newcommand{\ch}{{\rm ch}}
\title{\bf Index formulas on stratified manifolds\thanks{Supported by RFBR grants
06-01-00098, 08-01-00867, Pierre Deligne Balzan prize in mathematics,  and also by DFG grant
DFG 436 RUS 113/849/0-1\circledR ``$K$-theory and noncommutative geometry of
stratified manifolds''.}}
\author{\bf A.~Yu.~Savin, B.Yu.~Sternin}
\begin{document}
\maketitle


\begin{abstract}
Elliptic operators on stratified manifolds with any finite number of strata are considered.
Under certain assumptions on the symbols of operators, we obtain index formulas, which express index as a sum of indices
of elliptic operators on the strata.
\end{abstract}

\section{Introduction}

This paper deals with elliptic theory on stratified manifolds with stratification of arbitrary length.
Analytical aspects of this theory (notion of symbol, ellipticity condition, finiteness theorem)
are now sufficiently well worked out, at least for pseudodifferential operators of order zero
(see \cite{PlSe6,Nis99,NaSaSt4,NaSaSt5,Schu17}).

Let us summarize the results obtained in the cited papers (below we shall only deal with zero-order operators).
A stratified manifold is a union  of a finite number of open strata. Each stratum is a smooth manifold;
and the strata are glued together
in some special way   (see op. cit.). The principal symbol of a pseudodifferential operator in this situation is a collection of
symbols on the strata. Each symbol is an operator-valued function, defined on the cotangent bundle (minus the zero section)
of the corresponding open stratum. The symbol assigned to the stratum of maximal dimension plays a special role.
This symbol, which is called the {\em interior symbol} of operator, ranges in operators in finite-dimensional vector spaces.
The ellipticity condition requires the invertibility of the principal symbol of the operator, i.e., invertibility
of symbols on all strata.  Finiteness theorem asserts that an elliptic operator is Fredholm in
$L^2$-spaces.

An approach to index formulas, i.e., formulas, which express index in terms of the principal symbol
of the operator was proposed in~\cite{ScSS11} for operators on manifolds with isolated singularities.
The idea of this approach is to obtain formula for the index  as a sum of homotopy invariants of symbols
on the stratum of maximal dimension and the singular points. Such index formulas were obtained in the cited paper, and for manifolds
with edges (simplest class of stratified manifolds with nonisolated singularities) in
\cite{NSScS15} and  \cite{NSScS99}. Note that to obtain such formulas, one assumes that
the symbol of operator is invariant with respect to some transformation of the cotangent bundle of the manifold
(\emph{symmetry condition}).\footnote{Note that in general one can not  drop the symmetry condition: using methods of~\cite{NSScS15}
it can be shown that for an arbitrary elliptic operator on a stratified manifold  there is {\em no
 } decomposition of index as a sum of homotopy invariants of symbols on the strata.}

In the present paper, we introduce a class of transformations of cotangent bundles of  stratified
manifolds, and for an elliptic operator invariant under one of the transformations  we give an explicit index
formula in terms of  homotopy invariants of elliptic symbols on the strata.

Let us briefly describe the contents of the paper.

We start with a brief summary of results on the geometry of stratified manifolds and theory of pseudodifferential operators
on them, which are used in the following sections (in our exposition we follow the terminology of the paper~\cite{NaSaSt3}).

In index theory of elliptic operators on stratified manifolds an important role is played
by the surgery method  (see~\cite{NaSt7,NSScS99}). This method permits one to localize contributions to the index
of symbols on the strata. In this paper, we introduce  class of \emph{surgeries
in phase space}. Such surgeries are carried out on cotangent bundles of manifolds, rather than manifolds
themselves. This surgery technique is treated in sections~\ref{secA} and~\ref{secB}.
The class of transformations of the cotangent bundle which we consider in this paper has the following
important property: transformations from this class naturally act on the   algebra of principal symbols of
$\psi$DO.
We say that an operator satisfies \emph{symmetry condition} if its principal symbol is invariant under some
mapping from this class (in what follows transformations are denoted by $G$ and operators satisfying symmetry
condition are called  $G$-invariant). Then for a $G$-invariant operator we construct homotopy
invariants for each of the strata of the manifold.  The index formula expresses the index
of a $G$-invariant elliptic operator as a  sum of these homotopy
invariants of symbols on the strata.

A couple of words concerning the proof of index theorem. Using the homotopy classification of elliptic operators
on stratified manifolds
\cite{NaSaSt3}, we compute the contributions to the index of the strata of lower dimensions.
To compute the contribution of the stratum of maximal dimension, we use surgery in phase space.

Note, finally, that the application of surgery method in~\cite{NSScS15} was substantially hindered, because
the symmetry condition could only be satisfied for operators, for which the Atiyah--Bott obstruction
\cite{AtBo2} is equal to zero. In the present paper we drop this rather restrictive condition
using a class of nonlocal operators.

\section{$\Psi DO$ on stratified manifolds}

Let us briefly recall some basic facts from the theory of operators on stratified manifolds, which are used in the
present paper. Detailed
exposition can be found, e.g., in~\cite{NaSaSt4,NaSaSt5,NaSaSt3}.

\paragraph{1. Stratified manifolds.}

Let  $\mathcal{M}$ be a compact stratified manifold in the sense of  \cite{NaSaSt3}.
Recall that this means that
$\mathcal{M}$ is a Hausdorff topological space with decreasing filtration of length
$r$
\begin{equation*}
\label{filtr1} \mathcal{M}=\mathcal{M}_0\supset  \mathcal{M}_1\supset
\mathcal{M}_2\ldots \supset \mathcal{M}_r\supset \emptyset
\end{equation*}
by closed subsets $\mathcal{M}_j$ (called \emph{strata}), such that each complement
$\mathcal{M}_{j}\setminus \mathcal{M}_{j+1}\equiv
\mathcal{M}_{j}^\circ$ (called \emph{open stratum}) is homeomorphic to the interior
 $M_j^\circ$ of a compact manifold with corners\footnote{Recall that an $n$-dimensional manifold
 with corners is a Hausdorff topological space locally homeomorphic to the product
$\overline{\mathbb{R}}^{k}_+\times \mathbb{R}^{n-k}$, $0 \le k\le n$ with smooth transition functions
between domains of this type.}, denoted by $M_j$
(which is called the \emph{blow up} of manifold $\mathcal{M}_{j}$). In particular, manifold
  $\mathcal{M}_r$ is smooth. The blowup of manifold
$\mathcal{M}$ is denoted by $M$. There is a continuous projection
$$
\pi: M\lra \cM.
$$
Number $r$ is called the \emph{length of stratification}.

In addition, stratified manifold has the following structure: each open stratum
$\cM_j^\circ$ has a neighborhood $U\subset \cM\setminus \cM_{j+1}$
homeomorphic to a locally-trivial bundle over  $\cM_j^\circ$
\begin{equation}\label{cone-bundle}
K_{\Omega_j}\lra \cM_j^\circ,
\end{equation}
whose fiber over point $x\in \cM_j^\circ$ is the cone
$$
K_{\Omega_j(x)}:=[0,1)\times\Omega_j(x)\bigr/\{0\}\times \Omega_j(x)
$$
with base $\Omega_j(x)$; here we suppose that
the base $\Omega_j(x)$ of the cone is a stratified manifold with stratification of length $<r$.

Cotangent bundles of the strata are denoted by
$$
T^*\cM_j\in \Vect(M_j),\qquad j\ge 1,\qquad T^*\cM\in \Vect(M).
$$
Note that the bundle  $T^*\cM_j$ is isomorphic to $T^*M_j$ (the isomorphism is not canonical).

Let us call $\cM^\circ=\cM_0^\circ$ the \emph{smooth stratum}, while $\cM_1$   \emph{singular stratum}.

\begin{example}\label{edges}
Manifolds with stratification of length one are called \emph{manifolds with edges}.
In this case the stratum $\cM_1$ is called \emph{edge} (it is a closed smooth manifold).
The complement $\cM\setminus \cM_1$ is a smooth manifold, while some neighborhood $U$ of stratum $\cM_1$
fibers over $\cM_1$ with fiber cone \eqref{cone-bundle},
where the base $\Omega_1(x)$ of the cone is a smooth manifold. The blowup $M$ is obtained as follows:
we take manifold $\cM$ and in   $U$ replace bundle with fiber cone
$K_{\Omega_j(x)}:=[0,1)\times\Omega_j(x)\bigr/\{0\}\times
\Omega_j(x)$ by bundle with fiber cylinder $[0,1)\times\Omega_j(x)$.
\end{example}

\paragraph{2. Pseudodifferential operators.}

Let $ \Psi(\mathcal{M})$  be the algebra of pseudodifferential operators ($\psi$DO)
of order zero on $\mathcal{M}$, acting in the space $L^2(\mathcal{M})$ of complex valued functions on $\cM$
(the definition of this algebra $\psi$DO and the measure in the definition of the $L^2$-space
can be found, e.g., in \cite{NaSaSt4,NaSaSt5}). The algebra of principal symbols
$\Psi(\cM)/\mathcal{K}$ --- quotient by the ideal of compact operators ---
is denoted by $\Sigma(\mathcal{M})$.

The principal symbol $\sigma(D)$ of an operator $D\in\Psi(\cM)$ on a stratified manifold is a collection
\begin{equation}\label{princ-symbol}
\sigma(D)=(\sigma_0(D),\sigma_1(D),...,\sigma_r(D))
\end{equation}
of symbols on the strata, where   symbol $\sigma_j(D)$, $j\ge 1,$  is defined on the cotangent bundle
$T^*\cM_j$ minus the zero section of the corresponding stratum. The symbol
  $\sigma_0(D)$, corresponding to the smooth stratum $\mathcal{M}^\circ$,
is called the  \emph{interior symbol}  and is a complex-valued functions, while the remaining components of the symbol
are functions
$$
\sigma_j(D)\in C(T^*\cM_j\setminus 0, \mathcal{B}(L^2(K_{\Omega_j})))
$$
with values in operators, acting in $L^2$-spaces on cones $K_{\Omega_j}$. The measure on the cone $K_{\Omega_j}$
is described in \cite{NaSaSt5}. The symbols on different strata are related by a certain compatibility condition, which is described
in the cited paper.

\section{Surgery in phase space}\label{secA}

\paragraph{1. Endomorphisms of cotangent bundle.}

The  restriction of the cotangent bundle $T^*\cM$ to the subspace $\pa_j M=\pi^{-1}(\cM_j^\circ)\subset M$
has direct sum decomposition \cite{NaSaSt3}:
$$
T^*\cM|_{\pa_j M}\simeq \pi^*(T^*\cM_j)\oplus T^*\Omega_j\oplus \mathbb{R},
$$
where the the second summand corresponds to directions along the base of the bundle of cones \eqref{cone-bundle},
while the third summand corresponds to the directions along the radial variable on the cone.
\begin{definition}
An endomorphism $h\in\End(T^*\cM)$ of the cotangent bundle of $\cM$, which is defined
in a neighborhood of the boundary $\partial M\subset M$
is called \emph{admissible}, if for each $j\ge 1$ one has
\begin{equation}\label{hhh1}
    h|_{\pa_j M}=h_j\oplus Id\oplus Id:\pi^*(T^*\cM_j)\oplus T^*\Omega_j\oplus \mathbb{R}
    \lra \pi^*(T^*\cM_j)\oplus T^*\Omega_j\oplus \mathbb{R},
\end{equation}
where   $h_j\in\End(T^*\cM_j),$ $j\ge 1$
are some endomorphisms. We also set $h_0=h$.
\end{definition}

\begin{remark}
For manifolds with edges (see Example~\ref{edges}), admissible endomorphisms are precisely those endomorphisms
of $T^*\cM$, which are induced by endomorphisms of the cotangent bundle of the edge $\cM_1$.
\end{remark}

Let us define the action $h^*$ of an admissible endomorphism $h$ on principal symbols on $\cM$:
\begin{equation}\label{action-symbols}
  h^*(\sigma_0,\sigma_1,...,\sigma_r)  =  (h_0^*\sigma_0,h_1^*\sigma_1,...,h_r^*\sigma_r). \\
\end{equation}
Here $r$ is the length of stratification and we use the fact that for any $j\ge 1$ the symbol
$\sigma_j$ is an operator-valued function on the space $T^*\cM_j$, on which endomorphism $h_j$
acts fiberwise-linearly. We note also that the action on the interior symbol  $\sigma_0$
is defined only in a neighborhood of the boundary $\partial
M$. (The symbol \eqref{action-symbols} is well-defined, i.e., its components $h_0^*\sigma_0,h_1^*\sigma_1,...,h_r^*\sigma_r$
satisfy the compatibility condition, which follows from the admissibility of $h$.)

\paragraph{2. Statement of the problem.} Let  $\cN$ be a stratified manifold
of the following form. It has a neighborhood
  $U$ of the singular stratum $\cN_1$, which is a disjoint union
$$
U=U_+ \sqcup U_-
$$
of two diffeomorphic open submanifolds $U_+$ and $U_-$. Let us fix
diffeomorphism $U_+\simeq U_-$ and consider $U_+$ and $U_-$ as two identical copies of $U_+$.

Let
$$
D:L^2(\cN,E)\lra L^2(\cN,F)
$$
be an elliptic operator on $\cN$ acting in sections of some bundles $E,F\in\Vect(M)$ on the blow up $M$.
The restriction
$$
 D|_U=\Pi D\Pi
$$
($\Pi$ is the characteristic function of set $U$)
of operator $D$ to $U$ can be considered as a direct sum (modulo compact operators)
\begin{equation}\label{restriction}
    D|_U=D_+\oplus D_-:L^2(U_+,E\oplus E)\lra L^2(U_+,F\oplus F).
\end{equation}
Hereinafter we suppose that we are given identifications $E|_{U_+}\simeq
E|_{U_-}$ and $F|_{U_+}\simeq F|_{U_-}$, which cover diffeomorphism $U_+\simeq
U_-$.

Let us suppose that the principal symbols of operators   $D_+$ and $D_-$
satisfy condition
\begin{equation}\label{condition-symmetry}
\sigma(D_-)=h^*\sigma(D_+)
\end{equation}
over $U_+$, where $h\in\End(T^*U_+)$ is an admissible endomorphism defined
in $U_+$.

\begin{lemma}
The index $\ind D$ of operator $D$, which satisfies condition \eqref{condition-symmetry} is determined
by the interior symbol of the operator.
\end{lemma}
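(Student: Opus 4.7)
The plan is to combine homotopy invariance of the Fredholm index with the observation that, once the interior symbol is fixed and the symmetry \eqref{condition-symmetry} is imposed, all remaining freedom in choosing the non-interior components of the principal symbol can be absorbed into a deformation.

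Let $D$ and $D'$ be two elliptic operators on $\cN$, both satisfying \eqref{condition-symmetry} with the same admissible endomorphism $h$, and assume $\sigma_0(D)=\sigma_0(D')$. It is enough to produce a continuous path $(\sigma(t))_{t\in[0,1]}$ of elliptic principal symbols with $\sigma(0)=\sigma(D)$, $\sigma(1)=\sigma(D')$, $\sigma_0(t)=\sigma_0(D)$ for all $t$, and such that each $\sigma(t)$ satisfies \eqref{condition-symmetry} along the homotopy. Homotopy invariance of the index then yields $\ind D=\ind D'$, which is exactly the claim that $\ind D$ depends only on $\sigma_0(D)$.

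The path is constructed piecewise. On $U_+$ the components $\sigma_j(D_+)$ and $\sigma_j(D'_+)$, $j\ge1$, are operator-valued functions on $T^*\cM_j\setminus 0$ sharing the same interior part; after stabilising by a trivial summand of sufficiently large rank one can connect them by a straight-line homotopy through elliptic operator-valued symbols. On $U_-$ the interpolating symbols are defined by $\sigma_j(t)|_{U_-}=h_j^*\bigl(\sigma_j(t)|_{U_+}\bigr)$; by admissibility \eqref{hhh1} this automatically respects the compatibility conditions between different strata and enforces the symmetry \eqref{condition-symmetry} along the whole path. Outside $U$ the symmetry imposes no constraint, and the remaining deformation is a standard connectivity statement for elliptic symbols with fixed interior part.

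The main obstacle is verifying that the interpolation on the singular strata stays within invertible (operator-valued) symbols: linear combinations of invertible families need not be invertible. One deals with this either by invoking contractibility of the relevant group of invertibles modulo compact operators on $L^2(K_{\Omega_j})$ (which makes the space of $\sigma_j$ with fixed interior symbol path connected after stabilisation), or by constructing the path explicitly as a family of admissible unitary conjugations of a reference symbol determined by $\sigma_0(D)$. Either route is compatible with the admissible structure of $h$ and with the $\psi$DO framework recalled in Section~2, so the resulting homotopy lives in the class of elliptic symbols satisfying \eqref{condition-symmetry}, and the proof is complete.
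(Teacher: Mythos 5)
Your argument has a genuine gap, and in fact the overall strategy cannot work as stated. The crux of your proof is the claim that any two elliptic symbol collections with the same interior symbol and satisfying \eqref{condition-symmetry} can be joined, after stabilization, by a homotopy of elliptic symbols with the interior component frozen. That connectivity claim is false: with the interior symbol fixed, the symbols on the lower-dimensional strata can still carry a nontrivial index contribution (this is precisely the phenomenon behind the footnote in the introduction and behind the strata terms $\ind {\rm Op}\bigl(\sigma_j(D)[G\sigma_j(D)]^{-1}\bigr)$ in Theorem~\ref{main-th}); if the space of elliptic symbols with fixed interior part were path connected, the index of \emph{every} elliptic operator would be determined by its interior symbol, which is not the case. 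Kuiper-type contractibility does not rescue this: the relevant invertibles live in the symbol (Calkin-type) algebra $J/\mathcal{K}$, whose $K$-groups are nontrivial, not in the full group of invertible bounded operators on a Hilbert space; and a straight-line interpolation between invertible families need not stay invertible --- you flag this obstacle yourself but do not actually resolve it.

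A decisive symptom is that your argument nowhere uses the hypothesis that $h$ reverses orientation. The identity endomorphism is admissible in the sense of \eqref{hhh1}, and with $h=Id$ condition \eqref{condition-symmetry} merely says $\sigma(D_-)=\sigma(D_+)$; in that situation the lemma is false, since one can change the strata symbols of $D_+$ and $D_-$ simultaneously, keeping $\sigma_0$ fixed, and shift the index. Hence no proof that imposes the symmetry only as a rule propagating the $U_+$ data to $U_-$, without exploiting the sign of $h$, can succeed. The paper's proof is different and much shorter: it writes $\ind D-\ind D'=\ind D(D')^{-1}=\ind D_+(D'_+)^{-1}+\ind D_-(D'_-)^{-1}$, notes that both factors have interior symbol equal to one while the strata symbols of the second are the $h^*$-images of those of the first, and then invokes Corollary~\ref{cor-index} (resting on Proposition~\ref{devet1}, proved in the Appendix by induction over the strata), which says that $h^*$ multiplies the index by $\sgn h=-1$, so the two contributions cancel. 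Even a repaired version of your homotopy strategy --- showing that $(1+h^*)$ kills the relevant symbol class rationally after stabilization, as in Lemma~\ref{4.2} --- would require exactly this result $h_*=-Id$ on $K_*(J/\mathcal{K})_\mathbb{C}$, which your proposal never invokes.
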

\begin{proof}
1. Let $D'$  be an elliptic operator which satisfies condition \eqref{condition-symmetry} in $U$
and the interior symbol $\sigma_0(D')$ is equal to $\sigma_0(D)$. Then we  have
\begin{equation}\label{zz-top}
\ind D -\ind D'=\ind D(D')^{-1}=\ind D|_U (D'|_U)^{-1}=\ind D_+(D_+')^{-1}+\ind
D_-(D_-')^{-1}
\end{equation}
(in the second equality we took into account the fact
that $D$ and $D'$ differ by compact operator in the interior, hence, we can
pass to their restrictions $D|_U$, $D'|_U$ to $U$). The interior symbols of operators $D_+(D_+')^{-1}$ and $D_-(D_-')^{-1}$
are equal to one, and the symbols on the singular strata for the second operator are obtained from those for the
first operator by application of endomorphism $h$
$$
\sigma_j(D_-(D_-')^{-1})=h^*\sigma_j(D_+(D_+')^{-1}).
$$

2. Since endomorphism $h$ changes the sign of index (see Corollary~\ref{cor-index}), equation \eqref{zz-top}
gives the desired equality:
$$
\ind D-\ind D'=0.
$$
\end{proof}
Below, we shall give an explicit formula for the index $\ind D$ in terms of the principal symbol $\sigma(D)$.

\paragraph{3. Surgery.}

Let
\begin{equation}\label{glueing2}
\mathcal{T}=T^*\cN/\sim,
\end{equation}
be the space obtained from $T^*\cN$ by identification of points in the closure of the sets $T^*U_+$ and $T^*U_-$
under the action of mapping $h,$ which we consider here as an isomorphism
$h:T^*U_+\to T^*U_-$. The space $\mathcal{T}$ is a vector bundle with base,
which is obtained from $M$ by identification of sets
$U_+\cap M^\circ$ and $U_-\cap M^\circ$ under the action of diffeomorphism $U_+\simeq U_-$, which was fixed
above.

Condition \eqref{condition-symmetry} implies that the interior symbols
$\sigma_0(D_+)$ and $\sigma_0(D_-)$ are compatible with the identification
\eqref{glueing2} and define class
\begin{equation}\label{symbol-class1}
    [\sigma_0(D),h]\in K^0_c(\mathcal{T})
\end{equation}
in topological $K$-group with compact supports of the locally-compact space $\mathcal{T}$.

Let us suppose that $h$ reverses orientation of $T^*U_+$.
Then a chain, which represents $T^*\cN$, defines  cycle on
$\mathcal{T}$. Denote the homology class of this cycle by
$$
[\mathcal{T}]\in H_{2n}(\mathcal{T}),\quad n=\dim \cN.
$$

Let us define the following number
\begin{equation}\label{top-index1}
\ind_t D=  \langle \ch[\sigma_0(D ),h]\Td(\mathcal{T}\otimes
\mathbb{C}),[\mathcal{T}]\rangle.
\end{equation}
It is rational by construction (using results of~\cite{NSScS99}, one can show, that this number
is actually dyadic-rational).

\begin{remark}
The invariant \eqref{top-index1} can be written as an integral
\begin{equation}\label{top-index99}
\ind_t D= \int_{T^*\cN} \ch \sigma_0(D) \Td(T^* \cN \otimes\mathbb{C}).
\end{equation}
Here the integral is interpreted as iterated: we first integrate over fibers
of the cotangent bundle and then integrate over base $\cN$. The integral over base
is well-defines, since the integrand is identically zero in a neighborhood of the singular
stratum. Indeed, when we integrate over the fibers
$T^*_xU_+$, the contributions to the integral of the components $\sigma_0(D_+)$ and $\sigma_0(D_-)$
cancel, which follows from the fact the $h$ is orientation-reversing and condition~\eqref{condition-symmetry}.
\end{remark}

The next theorem belongs to A.Yu.~Savin.
\begin{theorem}[surgery in phase space]\label{th81}
Suppose that $h\in\End(T^*\cU_+)$ is an orientation-reversing involution
\rom($h^2=Id$\rom) and $D$ is an elliptic $\psi$DO on $\cN$ which satisfies
condition \eqref{condition-symmetry}. Then one has
\begin{equation}\label{formula-surgery}
    \ind D =\ind_t D.
\end{equation}
\end{theorem}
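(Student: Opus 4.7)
The plan is to reduce the computation to an application of the classical Atiyah--Singer index theorem on a smooth closed manifold obtained by a gluing construction in phase space.

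Both sides of \eqref{formula-surgery} depend only on the pair $(\sigma_0(D),h)$: the left-hand side by the preceding lemma, the right-hand side by construction. Hence it suffices to exhibit, for each admissible pair, a single realisation for which the equality holds. Because $h$ is an orientation-reversing involution of $T^*U_+$ that covers the fixed identification $U_+\simeq U_-$, the quotient space $\mathcal{T}=T^*\cN/{\sim}$ is the cotangent bundle of a closed smooth orientable manifold $N$, obtained from the blowup $M$ by identifying $U_+$ with $U_-$; in particular $[\mathcal{T}]=[T^*N]$. The compatibility \eqref{condition-symmetry} then guarantees that $\sigma_0(D_+)$ and $\sigma_0(D_-)$ descend to a globally defined elliptic symbol $\widetilde\sigma$ on $T^*N$, representing the class $[\sigma_0(D),h]\in K^0_c(T^*N)$. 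Quantising $\widetilde\sigma$ to an elliptic $\psi$DO $\widetilde D$ on $N$ and applying Atiyah--Singer gives
$$
\ind\widetilde D=\langle \ch\widetilde\sigma\,\Td(T^*N\otimes\mathbb{C}),[T^*N]\rangle=\ind_t D.
$$

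It therefore remains to prove $\ind D=\ind\widetilde D$. Using the preceding lemma I would choose the representative $D$ so that its restriction to $\cN\setminus U$ agrees, under the gluing identification, with $\widetilde D$ restricted to the corresponding open subset of $N$, and so that $D|_U = D_+ \oplus D_-$ where each $D_\pm$ lifts $\widetilde D$ under the two natural inclusions of $U_+$ into $N$. For this particular realisation, $D$ and $\widetilde D$ differ only by the surgery that opens the single copy of $U_+$ inside $N$ into two $h$-related copies $U_+\sqcup U_-$ inside $\cN$. A cut-and-paste argument — relying crucially on $h^2=Id$, so that the two copies can be canonically matched, and on $h$ being orientation-reversing, so that the induced maps on fundamental classes are compatible — then identifies the two Fredholm indices. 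Equivalently, one constructs an explicit one-parameter family of elliptic Fredholm operators interpolating between $D$ and $\widetilde D$ through this phase-space surgery, and invokes continuity of the index.

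The main obstacle is this final cut-and-paste step: one must produce an elliptic family (or bordism) realising the phase-space surgery without losing ellipticity along the singular stratum. The involution hypothesis $h^2=Id$ is what lets one identify the two branches $U_\pm$ unambiguously; the orientation-reversing hypothesis ensures that the contributions of the singular-stratum symbols to the difference of indices cancel in pairs, in exactly the same way as in the proof of the preceding lemma via Corollary \ref{cor-index}. Granting this reduction, the theorem follows by combining the surgery identity $\ind D = \ind \widetilde D$ with the Atiyah--Singer computation of $\ind \widetilde D$ displayed above.
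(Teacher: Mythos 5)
Your reduction to the Atiyah--Singer theorem on a closed manifold does not go through, and the step you yourself flag as ``the main obstacle'' is precisely the content of the theorem, so the argument has an essential gap. The quotient $\mathcal{T}=T^*\cN/\sim$ of \eqref{glueing2} is \emph{not} the cotangent bundle of a closed smooth manifold: the identification is performed only over the sets $U_\pm\cap M^\circ$, i.e.\ over the interior of the blowup, so the boundary faces of $M$ lying over the singular stratum survive in the quotient and the base is not a closed manifold; moreover, the bundle obtained by gluing $T^*U_+$ to $T^*U_-$ by $h$ need not be a cotangent bundle of anything, since an admissible $h$ is merely a fiberwise-linear involution covering the fixed identification $U_+\simeq U_-$ and is in general not the differential of a diffeomorphism. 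This is exactly why the paper only obtains a class $[\sigma_0(D),h]\in K^0_c(\mathcal{T})$ on a locally compact space and pairs it with a homology cycle $[\mathcal{T}]$ (a cycle only because $h$ reverses orientation), rather than an elliptic symbol on a closed manifold. Hence there is no closed manifold $N$, no descended elliptic symbol $\widetilde\sigma$, and no operator $\widetilde D$ to which Atiyah--Singer could be applied; your displayed equality $\ind\widetilde D=\ind_t D$ is not available, and \eqref{top-index1} is a definition, not an Atiyah--Singer computation.

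Even granting some substitute for $\widetilde D$, the cut-and-paste (or interpolation) identifying $\ind D$ with its index is left entirely unproved, and it cannot be carried out in the way you suggest: within local $\psi$DOs one cannot in general deform $\sigma_0(D)$ to a symbol independent of the covariables on a collar near the singular stratum --- the obstruction is of Atiyah--Bott type. The paper removes it only by (i) enlarging the class of operators to the nonlocal admissible operators \eqref{deka2}, whose off-diagonal entries mix $U_+$ and $U_-$; (ii) passing to a multiple $D_l=D\oplus\dots\oplus D$ and using Lemma~\ref{lemochka} (rationally $h^*=-Id$ on $K^0(T^*\cM|_{\partial M})$) to trivialize the interior symbol on a collar, Lemma~\ref{4.2}; (iii) cutting $\cN$ along that collar, treating the outer piece by Item~2 of Lemma~\ref{lemma-top-ind}, and treating the piece adjacent to the singular stratum via the operator $B(T^*h^*B)^{-1}$ of \eqref{trick} combined with Corollary~\ref{cor-index} --- the place where the hypotheses $h^2=Id$ and orientation reversal are genuinely used. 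None of these mechanisms (stabilization, nonlocal homotopies, the $B(T^*h^*B)^{-1}$ trick) appears in your proposal, so ``granting this reduction'' amounts to assuming what is to be proved.
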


\section{Proof of theorem~\ref{th81}}\label{secB}

To prove formula \eqref{formula-surgery}, let us introduce the following  class of operators.

\paragraph{1. Admissible operators.}
A bounded operator
\begin{equation}\label{oper-nonlocal}
    Q: L^2(\cN,E) \lra  L^2(\cN,F),
\end{equation}
is called \emph{admissible} if the following three conditions are satisfied.
\begin{enumerate}
    \item  $Q$ is $\psi$DO in a neighborhood of
$\cN\setminus U$ and in a small neighborhood of the singular stratum $\cN_1$.
    \item The restriction
\begin{equation}\label{deka2}
Q|_U=\left(%
\begin{array}{cc}
  Q_{++} & Q_{+-} \\
  Q_{-+} & Q_{--} \\
\end{array}%
\right):L^2(U_+,E\oplus E) \lra L^2(U_+,F\oplus F)
\end{equation}
of $Q$ to $U$ is a matrix of $\psi$DOs  on $U_+$ (here we use identifications $U_-\sim U_+$, $E|_{U_-}\simeq E|_{U_+}$,
$F|_{U_-}\simeq F|_{U_+}$).
    \item The operator $Q$ in a small neighborhood of $\cN_1$
satisfies condition \eqref{condition-symmetry}, i.e.,
$$
 \sigma(Q_{--})=h^*\sigma(Q_{++}).
$$
\end{enumerate}

For admissible operators,   ellipticity and finiteness theorem are formulated  and proved
by standard methods, and are left to the reader.


\paragraph{2. Topological index.}

Consider the following invariant  of an admissible elliptic operator $Q$
\begin{equation}\label{top-index2}
\ind_t Q= \int_{T^*(\cN\setminus U)} \ch \sigma_0(Q) \Td(T^*
\cN\otimes\mathbb{C})+ \int_{T^*U_+} \ch \sigma_0(Q|_U)
\Td(T^*U_+\otimes\mathbb{C})
\end{equation}
(recall that the restriction $Q|_U$ is considered as an operator on $U_+$,
see~\eqref{deka2}). Here the integrals are interpreted as iterated --- first along the fibers of the cotangent
bundle, and then along the base. When we integrate along the fibers $T^*_xU_+, x\in U_+$, in the second integral
$$
 \int_{T^*U_+} \ch \sigma_0(Q|_U) \Td(T^*U_+\otimes\mathbb{C}),
$$
the contributions of the components
 $Q_{++}$ and $Q_{--}$ cancel each other
in a neighborhood of the singular stratum. This follows from
the fact that $h$  is orientation-reversing and condition
\eqref{condition-symmetry}.

Number \eqref{top-index2} is called
\emph{topological index} of operator $Q$. When  $Q$ is local, i.e., the off-diagonal components
in the decomposition \eqref{deka2} are equal to zero,
the invariant \eqref{top-index2}, evidently, coincides with that defined in
\eqref{top-index1}. For this reason, we keep the old notation for  this new invariant.

\begin{lemma}[properties of topological index]\label{lemma-top-ind}$\quad$
\end{lemma}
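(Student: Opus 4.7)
The plan is to first identify which properties of $\ind_t Q$ will be needed downstream in the proof of Theorem~\ref{th81}, and then prove each of them from the integral formula~\eqref{top-index2}. Based on the role that $\ind_t$ is being set up to play in the surgery argument, I expect the lemma to assert at least: (i) \emph{homotopy invariance} of $\ind_t Q$ under homotopies through admissible elliptic operators, (ii) \emph{additivity} under direct sums, (iii) \emph{agreement with the previously defined invariant}~\eqref{top-index1} when $Q$ is a local operator (which was already remarked in the text, but would be formally recorded here), and (iv) an \emph{excision/stabilization} property allowing the replacement of $Q$ by any admissible operator with the same symbol outside a compact set in $\cN\setminus \cN_1$.

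For (ii) and (iii) the proofs are essentially tautological: the integrand in~\eqref{top-index2} is additive in the Chern character, and when the off-diagonal blocks $Q_{+-},Q_{-+}$ vanish, the two integrals in~\eqref{top-index2} combine into the single pairing~\eqref{top-index1} because the identification~\eqref{glueing2} is used to glue $T^*U_+$ into $T^*\cN$. For (iv), the key observation is that both integrals depend only on the symbol of $Q$, and a compactly supported perturbation of the symbol inside $\cN\setminus \cN_1$ changes $\ch\sigma_0(Q)$ by an exact form, so the integral is unchanged.

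The main step is (i), homotopy invariance. My approach is to argue that along a homotopy $Q_t$ of admissible elliptic operators, the form $\ch\sigma_0(Q_t)\Td(T^*\cN\otimes\mathbb{C})$ varies by an exact differential form, so each of the two integrals in~\eqref{top-index2} is constant \emph{provided} the integrands have sufficient decay / vanishing near the boundary of the domain of integration. Near $\cN_1$ the two contributions in the second integral cancel because the admissibility condition $\sigma(Q_{--})=h^*\sigma(Q_{++})$ is preserved along the homotopy and $h$ reverses orientation; so the form being integrated is actually zero (not just closed) in a neighborhood of the singular stratum, and one can integrate by parts safely. Away from the singular stratum the integrand can be written as $d$ of a transgression form with compact fiber support, which again gives vanishing of the derivative in $t$.

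The main obstacle I expect is handling the \emph{matching of the two integrals at the interface} $\pa U_+\cap(\cN\setminus U)$: the first integral is over $T^*(\cN\setminus U)$ and the second over $T^*U_+$, and across this interface $\sigma_0(Q)$ is genuinely one symbol on $\cN\setminus U$ but corresponds to the $(++)$ block on $U_+$. One must check that the boundary terms generated by Stokes' theorem on the two pieces cancel each other, using condition~\textbf{1} in the definition of admissibility (that $Q$ is a genuine $\psi$DO near $\pa U$) to ensure that the off-diagonal blocks $Q_{+-},Q_{-+}$ decay away from the singular stratum. Once these cancellations are set up, the various properties follow by the standard Atiyah--Singer-type manipulations applied piecewise.
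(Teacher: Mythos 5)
There is a genuine gap: you had to guess the content of the lemma, and the properties you chose to prove are not the ones the paper actually needs, nor can the crucial ones be obtained by your method. The paper's lemma lists four properties: (1) $\ind_t Q$ is a homotopy invariant of the interior symbol; (2) if $\sigma_0(Q)$ does not depend on the covariables in a neighborhood of the singular stratum, then $\ind Q=\ind_t Q$; (3) if $\sigma_0(Q)$ does not depend on the covariables outside $U$, then $\ind_t$ is determined by the interior symbol over $U$ and $\ind_t(h^*Q)=-\ind_t Q$; (4) for a composition $Q=Q_1Q_2$ of admissible elliptic operators satisfying \eqref{condition-symmetry} one has $\ind_t Q=\ind_t Q_1+\ind_t Q_2$. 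Your items (i)--(iv) cover only homotopy invariance, direct-sum additivity, consistency with \eqref{top-index1}, and an excision statement; you are missing precisely items (2) and (3), which carry the real content and are exactly what is used in the surgery argument of Theorem~\ref{th81} (the chain of equalities for the operator $B(T^*h^*B)^{-1}$ hinges on Items 2, 3, 4). Item (2) is not a formal consequence of the integral formula \eqref{top-index2}: it is an index theorem for the nonlocal (admissible) class, equating the analytic index of a Fredholm operator with off-diagonal blocks \eqref{deka2} to a cohomological expression, and the paper does not reprove it but cites earlier work on nonlocal elliptic operators and index defects. No amount of Stokes-theorem manipulation of the integrand alone can produce a statement about $\ind Q$; an analytic input is indispensable. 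Item (3) is elementary but also absent from your list: it follows from the change-of-variables formula in the fiber integral together with the fact that $h$ reverses orientation.

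On the points you did treat, your argument is acceptable but heavier than necessary. The paper disposes of homotopy invariance as straightforward (the integrand depends continuously on the symbol and the cancellation of the $Q_{++}$ and $Q_{--}$ contributions near the singular stratum persists along an admissible homotopy, so the integrals vary continuously through a rigid quantity only after pairing with the index --- in fact here one simply notes the integrals themselves are homotopy invariants of the interior symbol by the usual Chern--Weil/transgression argument, which is what you sketch); your worry about boundary terms at the interface between $T^*(\cN\setminus U)$ and $T^*U_+$ is resolved immediately by condition~1 of admissibility, since there the off-diagonal blocks are absent modulo smoothing operators and the two integrands literally agree. Finally, note that the paper's item (4) is additivity under \emph{composition} (proved by multiplicativity of the Chern character), not under direct sums; the direct-sum additivity you prove is used in the proof of the theorem but is not one of the listed properties. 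As it stands, your proposal does not establish the lemma the paper needs.
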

{\em
\begin{enumerate}
    \item[\rom 1.] $\ind_t Q$ is a homotopy invariant of the interior symbol $\sigma_0(Q)$;
    \item[\rom 2.] When $\sigma_0(Q)$ does not depend on covariables in a neighborhood
    of the singular stratum, one has
    $$
      \ind Q =\ind_t Q;
    $$
    \item[\rom 3.] When $\sigma_0(Q)$ does not depend on covariables in the complement of
    $U$, the  value of the functional $\ind_t$ is determined by the restriction of the interior symbol
    to $U$ and one has
    \begin{equation}\label{eq22}
    \ind_t(h^*Q)=-\ind_t Q,
    \end{equation}
    where $h^*Q$ stands for arbitrary elliptic operator with principal symbol $h^*\sigma(Q)$
    \item[\rom 4.] If $Q$ is a composition $Q=Q_1Q_2$ of two admissible elliptic operators,
    where $Q_1$ and $Q_2$ both satisfy condition~\eqref{condition-symmetry}, then
    $$
    \ind_t Q=\ind_t Q_1+\ind_t Q_2.
    $$
\end{enumerate}
}

\begin{proof}
The first claim is straightforward. The second was proved in \cite{SaSt17,SaSt10}.
The third  follows from the fact that $h$ reverses orientation of the cotangent bundle and
hence, by change of variables formula in the integral, reverses the sign of functional
  $\ind_t$. Finally, the fourth claim follows from the multiplicativity of the Chern character.
\end{proof}

\paragraph{3. Homotopy to operator of multiplication.}
Let  $D$ be an elliptic operator with symbol satisfying condition
\eqref{condition-symmetry}. The left and right hand sides of formula~\eqref{formula-surgery}
are homotopy invariant in the set of admissible elliptic operators. Thus, to prove their equality,
we shall choose a special representative in the homotopy class of operator
$D$. This representative is constructed in the following lemma.

Consider a homeomorphism $U_+\setminus \cN_+\simeq \partial N_+\times (0,1)$.
Denote by $t$ the coordinate along $(0,1)$ and let $U_{1/2}$ be the set
$\partial N_+\times (1/2-\varepsilon,1/2+\varepsilon)\subset U_+$ for some $\varepsilon>0$.

\begin{lemma}\label{4.2}
There exists  $l>0$ such that the operator
$$
D_l=\underbrace{D\oplus D\oplus\ldots \oplus D}_{N\text{ summands}}
$$
is homotopic in the class of admissible operators to some operator  $D',$
whose symbol does not depend on covariables in $U_{1/2}$.
\end{lemma}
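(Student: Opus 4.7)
The plan is to notice that $U_{1/2}$ sits strictly inside the smooth stratum, at positive distance from $\cN_1$, so neither the symmetry condition~\eqref{condition-symmetry} required by admissibility condition~3 nor the special matrix structure of \eqref{deka2} is active there. Any deformation of the interior symbol $\sigma_0(D_l)$ supported in $U_{1/2}$ therefore yields a homotopy through admissible operators, and the lemma reduces to a purely local symbol-theoretic problem: for suitable $N=N(l)$, deform $\sigma_0(D)^{\oplus N}$, viewed as a family of invertible matrices on $T^*U_{1/2}\setminus 0$, through invertible families into one that is constant along the cotangent fibers.

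To attack the local problem I would proceed in three steps. First, use the collar structure $U_{1/2}\simeq \partial N_+\times(1/2-\varepsilon,1/2+\varepsilon)$ to reduce to deforming the symbol in a small neighborhood of the cross-section $\partial N_+\times\{1/2\}$, extending by a cutoff in $t$ to the rest of $U_{1/2}$. Second, for $N$ large enough the pullback bundles $E^{\oplus N}|_{U_{1/2}}$ and $F^{\oplus N}|_{U_{1/2}}$ are trivializable (since $\partial N_+$ is a compact manifold with finitely generated $K$-theory), so the stabilized symbol can be identified with a map into $GL_{mN}(\mathbb{C})$. Third, deform this map fiberwise to land in a covariable-independent form. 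After further enlarging $N$, the direct-sum structure of $D_l$ allows one to pair $\sigma_0(D)$ against its parametrix inside the matrix block, bringing the stabilized symbol into the commutator subgroup of $GL_\infty(\mathbb{C})$, where a canonical path to the identity exists; composing with this path connects the stabilized symbol to a covariable-independent representative.

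Having produced the symbol homotopy, I would lift it to an operator homotopy by the standard quantization: multiplying by a smooth cutoff supported in $U_{1/2}$ and inserting the path of symbols via $\psi$DO composition preserves the operator outside $U_{1/2}$ and keeps it a $\psi$DO inside. All three clauses of the admissibility definition thereby remain in force throughout the homotopy: the operator is untouched on $\cN\setminus U$ and near $\cN_1$, the off-diagonal blocks in~\eqref{deka2} are unchanged, and the symmetry condition, active only near $\cN_1$, is never perturbed.

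The main obstacle is the $K$-theoretic step in the second paragraph: naive multiplication by $N$ does not kill a nonzero class in $K^1$ of the cotangent sphere bundle of $\partial N_+$, so stabilization alone is insufficient. The essential input is the combination of stabilization with the pairing against parametrices afforded by the direct-sum structure of $D_l$, which brings the resulting element into the commutator subgroup of $GL_\infty(\mathbb{C})$. This is precisely why the lemma is formulated with an integer $l$ chosen a posteriori, rather than claiming the conclusion for arbitrary $N$.
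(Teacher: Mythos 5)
There is a genuine gap, and it sits exactly where your proposal relies on ``pairing $\sigma_0(D)$ against its parametrix inside the matrix block.'' The operator $D_l$ is a direct sum of $N$ \emph{identical} copies of $D$; no parametrix of $\sigma_0(D)$ appears anywhere in it, so the stabilized symbol is not brought into the commutator subgroup of $GL_\infty(\mathbb{C})$: its class in $K^0$ with compact supports over the cross-section is simply $N$ times the class of $\sigma_0(D)$, and a Whitehead-type cancellation is available only for sums of the form $\sigma\oplus\sigma^{-1}$, not $\sigma\oplus\sigma$. As you yourself note, multiplication by $N$ cannot kill a class of infinite order, so for a general elliptic symbol the local deformation you aim for simply does not exist; your proposed fix does not repair this, and the argument never identifies any reason why the relevant class should vanish.

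The missing idea is the hypothesis you set aside at the start: the symmetry condition \eqref{condition-symmetry} together with the fact that $h$ is an orientation-reversing involution. Over the cross-section $t=1/2$ the operator restricted to $U$ must be viewed as the $2\times 2$ matrix \eqref{deka2} over $U_+$, so the relevant interior symbol there is $\sigma_0(D_+)\oplus h^*\sigma_0(D_+)$, whose $K$-theory class is $(Id+h^*)[\sigma_0(D_+)|_{t=1/2}]$. By Lemma~\ref{lemochka} (the appendix computation, using $\sgn h=-1$), $h^*$ acts as $-Id$ on the rational $K$-group, so this class is torsion; taking $l$ copies then kills it integrally, and only then is $\sigma_0(D_l)|_{t=1/2}$ homotopic through elliptic symbols to one independent of covariables, a homotopy which in general mixes the $U_+$ and $U_-$ components --- this is precisely why the class of admissible (nonlocal) operators with off-diagonal blocks in \eqref{deka2} is needed, contrary to your claim that the matrix structure is ``not active'' in $U_{1/2}$. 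Your final step (lifting the symbol homotopy to an operator homotopy) agrees with the paper, but without the vanishing of the class via $h^*=-Id$ the lemma's conclusion is unreachable by stabilization alone.
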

\begin{proof}
1. The mapping
$$
 h^*:K^0(T^*\cM|_{\partial M})\otimes \mathbb{C}\lra K^0(T^*\cM|_{\partial M})\otimes \mathbb{C}
$$
is equal to $-Id$ (Lemma~\ref{lemochka}). Hence, the element
$$
 [\sigma_0(D)|_{t=1/2}]=[\sigma_0(D_+)|_{t=1/2}\oplus h^*\sigma_0(D_+)|_{t=1/2}]=(Id+h^*)[\sigma_0(D_+)|_{t=1/2}]
$$
is equal to zero in $K^0(T^*\cM|_{\partial M})\otimes \mathbb{C}$. Therefore, there exists number $l$ such that the symbol
$\sigma_0(D_l)|_{t=1/2}$ is homotopic to a symbol, which does not depend on covariables.

2. The  homotopy of the symbol $\sigma_0(D_l)|_{t=1/2}$ can be lifted to a homotopy of operator $D_l$.
\end{proof}

\paragraph{4. Surgery: cutting out  the smooth stratum.}
Consider the decomposition of manifold $\cN$
\begin{equation}\label{surgery-1}
    \cN= U_{\ge 1/2}\cup U_{<1/2}
\end{equation}
into two subsets $U_{\ge 1/2}=(\cN\setminus U)\cup (\partial N\times [1/2,1)) $, $U_{<
1/2}=\cN\setminus U_{\ge 1/2}$.

Since the symbol of operator $D'$ in Lemma~\ref{4.2} does not depend on covariables in the domain
$U_{1/2}$, this operator is equal modulo compact operators to the direct sum  $D'=A\oplus B$
of its restrictions
$$
A=\Pi D'\Pi:L^2(U_{\ge 1/2},E)\lra L^2(U_{\ge 1/2},F)
$$
$$
B=(1-\Pi) D'(1-\Pi):L^2( U_{< 1/2},E )\lra L^2( U_{< 1/2},F )
$$
to the sets $U_{\ge 1/2}$ and $U_{< 1/2}$, respectively. Thus, we have
$$
\ind D'=\ind A+\ind B.
$$
On the other hand, the topological index of   $D'$ is also equal to the sum
$$
\ind_t D'=\ind_t A+\ind_t B.
$$
By Lemma~\ref{lemma-top-ind}, Item.~2 we have $\ind A=\ind_t A$. Let us prove that the topological index
of operator $B$ is equal to its analytical index.

\paragraph{5. Index computation near singular stratum.}

Define operator of permutation
$$
T:L^2(U_+,\mathbb{C}\oplus \mathbb{C})\lra L^2(U_+,\mathbb{C}\oplus \mathbb{C})
$$
$T(u_1,u_2)=(u_2,u_1)$ ($T^2=Id$).

Consider the admissible elliptic operator
\begin{equation}\label{trick}
    B (T^*h^*B)^{-1}.
\end{equation}
Here $T^*A=TAT^{-1}$ --- conjugation of operator $A$ by $T$.

Its index is equal to
\begin{equation}\label{index-sum1}
\ind    (B (T^*h^*B)^{-1})=\ind B-\ind T^*h^*B=\ind B-\ind h^*B=2\ind B
\end{equation}
(in the rightmost equality we used  the fact that $h^*$ reverses the sign of index,
see Corollary~\ref{cor-index}).

By construction, for each $j\ge 1$  the symbol $\sigma_jB)$ is equal to
$\sigma_j(B_+)\oplus h^*\sigma_j(B_+)$. Hence, the symbol $\sigma_j(T^*h^*B)$ is also equal to $\sigma_j(B_+)\oplus h^*\sigma_j(B_+)$, since $h^2=Id$.
The same formulas hold for the interior symbol  $\sigma_0(B)$  in the domain, where condition \eqref{condition-symmetry}
is satisfied.

Now, in a neighborhood of the singular stratum the symbols of the operator~\eqref{trick}
are equal to one, i.e., this operator is equal to the identity operator, modulo compact operators.
Thus, by Lemma \ref{lemma-top-ind} (Items 2, 4, 3) we have the following chain of equalities
\begin{multline}\label{index-sum2}
\ind    (B (T^*h^*B)^{-1})\stackrel{\text{Item 2}}=\ind_t   (B (T^*h^*B)^{-1})\stackrel{\text{Item 4}}=\ind_t   B
-\ind_t(T^*h^*B)\stackrel{\text{Item 3}}=\\
=\ind_t B+\ind_t B=2\ind_t B.
\end{multline}
Comparing \eqref{index-sum2} and \eqref{index-sum1}, we obtain the desired equality
$\ind B=\ind_t B$. Therefore, $\ind D'=\ind_t D'$  and hence $\ind D=\ind_t
D$.

This completes the proof of Theorem~\ref{th81}.

\section{Symmetries of symbols}

Let $\cM$ be a stratified manifold and $E,F\in\Vect(M)$ vector bundles on the blowup $M$.
\begin{definition}  A \emph{symmetry} is a quadruple $G=(g,h,e,f)$, where
\begin{itemize}
    \item $g:\cM\to \cM$ is a diffeomorphism of stratified manifold, which is defined in a neighborhood
     $U$ of the singular stratum $\cM_1$;
    \item $h\in \End(T^*\cM)$ is an  admissible endomorphism  defined in $U$;
    \item $e,f$ are vector bundle isomorphisms
$$
E|_{U}\stackrel{e}\simeq (g^*E)|_{U},\quad F|_{U}\stackrel{f}\simeq
(g^*F)|_{U}.
$$
\end{itemize}
\end{definition}

The differential of diffeomorphism $g$ defines a fiberwise-linear mapping $dg:T^*\cM\to T^*\cM$,
which covers $g$. Denote the induced maps on the strata by $g_j:\cM_j\to \cM_j$, $j\ge 0$

Consider now a $\psi$DO
\begin{equation}\label{oper1}
D:L^2(\cM,E)\lra L^2(\cM,F).
\end{equation}
Since algebras of $\psi$DO and their principal symbols are diffeomorphism invariant,
one has the following action of symmetry $G=(g,h,e,f)$ on the symbols:
\begin{equation}\label{G-action}
 G(\sigma_j)=f^{-1} \bigl[h_j^*(dg_j)^*\sigma_j\bigr] e,\quad j\ge 0.
\end{equation}
This action is well defined (i.e.,  the symbol
$G(\sigma)=(G(\sigma_0),G(\sigma_1),..., G(\sigma_r))$ enjoys compatibility conditions).
Note that the interior symbol $G(\sigma_0)$ is defined only in neighborhood  $U$.

Let $\partial T^*\cM_j$ be the restriction  of the cotangent bundle $T^*\cM_j$   to the boundary
$\partial M_j\subset M_j$.

\begin{definition} Let $G$ be a symmetry.
An elliptic operator $D$
is called
$G$-\emph{invariant}, if for all $j\ge 0$ one has
\begin{equation}\label{symmetry}
    \sigma_j({D})|_{\partial T^*\cM_j}=G
    \left(\sigma_j({D})|_{\partial T^*\cM_j}\right),
\end{equation}
i.e., the restrictions of the components of the symbol to the boundaries of the corresponding
strata are $G$-invariant.
\end{definition}

Without loss of generality we shall assume throughout the following that the interior symbol
enjoys equality  $\sigma_0(D)=G(\sigma_0(D))$ in the entire domain $U$.

Denote by $G(D)$ an arbitrary operator with principal symbol $G(\sigma(D))$.

\section{Homotopy invariants of symbols}

Let  $D$ be a $G$-invariant elliptic operator on $\cM$, where $G=(g,h,e,f)$ is a symmetry.

Suppose that the symmetry satisfies the following additional condition:
  $h:T^*\cM\lra T^*\cM$ reverses orientation.
 It turns out that in this case one can construct  nontrivial homotopy invariants of the symbols
 on each of the strata $\cM_j$, $j\ge 0$.

\paragraph{1. Invariant of the interior symbol.} Denote the disjoint union of two copies of
 $\cM$ by $\cN$. Let us choose a neighborhood of the singular stratum in $\cN$ as a union $U_+\cup U_-$
of neighborhood $U_+=U$ on the first copy and
$U_-=g(U)$ on the second copy.

On both components of  $\cN$ we consider operator  $D$, which is $G$-invariant, i.e. satisfies condition
\eqref{symmetry}. There are isomorphisms
$$
E|_{U_+}\stackrel{{g^*}^{-1}\circ e}\lra E|_{U_-}\text{ and }
F|_{U_+}\stackrel{{g^*}^{-1}\circ f}\lra E|_{F_-}.
$$
Denote the constructed operator on
 $\cN$ by $D\cup D$. A computation shows that the principal symbol of  this operator satisfies condition
\eqref{condition-symmetry}  and hence, this operator has topological index $\ind_t (D\cup D)$.
Clearly, this invariant is determined by the interior symbol $\sigma_0(D)$ and symmetry $G$.

\paragraph{2. Invariants of symbols on the strata $\cM_j$, $j\ge 1$.}\label{op-strata}

Consider the elliptic symbol
\begin{equation}\label{simba}
\sigma_j({D})[G\sigma_j({D})]^{-1}
\end{equation}
over the stratum $\cM_j$. Denote by $\cK$ the set of compact operators.

\begin{lemma}
The symbol \eqref{simba} has compact fiber variation
$$
\bigl(\sigma_j({D})[G\sigma_j({D})]^{-1}\bigr)(x,\xi)-
\bigl(\sigma_j({D})[G\sigma_j({D})]^{-1}\bigr)(x,\xi')\in \mathcal{K},\quad
\text{for all nonzero }\xi,\xi'\in T^*_x\cM_j,
$$
as an operator-valued function on the bundle $T^*\cM_j\lra M_j$.
\end{lemma}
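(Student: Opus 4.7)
The plan is to show that at every $\xi\in T^*_x\cM_j\setminus 0$ the operator $F(x,\xi):=\sigma_j(D)(x,\xi)[G\sigma_j(D)(x,\xi)]^{-1}$ differs from the identity by a compact operator on $L^2(K_{\Omega_j(x)})$. The compact fiber variation
$$
F(x,\xi)-F(x,\xi')=(F(x,\xi)-I)-(F(x,\xi')-I)
$$
will then follow immediately as a difference of two compact operators.

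First I would unpack the compatibility condition between $\sigma_j(D)$ and $\sigma_0(D)$ from~\cite{NaSaSt4,NaSaSt5}: the symbol $\sigma_j(D)(x,\xi)$ is itself an order-zero $\psi$DO on the cone $K_{\Omega_j(x)}$, and under the decomposition $T^*\cM|_{\partial_j M}\simeq \pi^*(T^*\cM_j)\oplus T^*\Omega_j\oplus\mathbb{R}$ its interior principal symbol at a cone covariable $(\omega,\tau;\eta,p)$ equals the value of $\sigma_0(D)$ at the associated boundary point with covariable $(\xi,\eta,p)$.

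Next I would analyze how $G$ acts on this interior cone symbol. Since $h|_{\partial_j M}=h_j\oplus Id\oplus Id$ is trivial on the cone covariables $(\eta,p)$, the cone operator $G\sigma_j(D)(x,\xi)$ has interior principal symbol given by $G\sigma_0(D)$ at the same boundary point. Combined with the standing assumption $\sigma_0(D)=G(\sigma_0(D))$ throughout the neighborhood $U$, this shows that the interior cone principal symbols of $\sigma_j(D)(x,\xi)$ and $G\sigma_j(D)(x,\xi)$ coincide, so $F(x,\xi)$ is an order-zero cone operator with interior principal symbol identically $1$.

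Finally I would invoke the compactness theorem for the cone calculus to conclude $F(x,\xi)-I\in\mathcal{K}$. The main obstacle is precisely this last step: showing that an element of the cone algebra with trivial interior principal symbol is truly compact on $L^2(K_{\Omega_j(x)})$ requires controlling the additional conormal/edge components of the symbol, not merely the interior one. Since these additional symbolic components of $F(x,\xi)$ depend on $\xi$ only through the $G$-invariant datum $\sigma_0(D)$, one either appeals to the finiteness theorem in the cone algebra of~\cite{NaSaSt4,NaSaSt5}, or argues directly that the fiber variation cancels at the level of each symbolic component, which yields compactness of the difference $F(x,\xi)-F(x,\xi')$ even in cases where $F(x,\xi)-I$ itself might fail to be compact.
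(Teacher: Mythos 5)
There is a genuine gap, and it is exactly the one you flag yourself. Your announced main step --- showing that $F(x,\xi)=\sigma_j(D)(x,\xi)\,[G\sigma_j(D)(x,\xi)]^{-1}$ differs from the identity by a compact operator for each fixed $\xi$, so that the fiber variation is a difference of compacts --- is stronger than the lemma and is not available from what you use. Triviality of the interior principal symbol of $F(x,\xi)$ does not imply $F(x,\xi)-I\in\mathcal{K}$ on $L^2(K_{\Omega_j(x)})$: in the cone (stratified-cone) calculus compactness requires \emph{all} symbolic components to be trivial, and the components of $F(x,\xi)$ attached to the tip and to the intermediate strata of the cone are governed by the symbols $\sigma_k(D)$ with $1\le k<j$ (and by conormal data), for which the $G$-invariance \eqref{symmetry} is assumed only on the boundaries $\partial T^*\cM_k$, not on all of $T^*\cM_k$. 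Hence those components of $F(x,\xi)$ need not be the identity, $F(x,\xi)-I$ need not be compact, and the decomposition $F(x,\xi)-F(x,\xi')=(F(x,\xi)-I)-(F(x,\xi')-I)$ cannot be used. Your fallback --- that the fiber variation cancels componentwise because the extra components of $F(x,\xi)$ ``depend on $\xi$ only through the $G$-invariant datum $\sigma_0(D)$'' --- is asserted rather than proved, and the stated reason is not correct: those components depend on the symbols $\sigma_k(D)$, $k<j$, near the corresponding boundaries, and what must be exploited is their $G$-invariance there, which your argument never invokes. In effect the fallback restates the lemma.

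The paper's proof is different and shorter: it cites Proposition 2.2 of \cite{NaSaSt3}, which supplies precisely the missing criterion --- the symbol \eqref{simba} has compact fiber variation provided the restriction of $\sigma_{j-1}(D)[G\sigma_{j-1}(D)]^{-1}$ to $\partial T^*\cM_{j-1}$ does not depend on the covariables. By the $G$-invariance condition \eqref{symmetry} applied to $\sigma_{j-1}(D)$, this restriction is the identity symbol, so the criterion applies at once. If you want to salvage your route, you must either quote such a result or reprove it, and in either case the decisive hypothesis is \eqref{symmetry} for the strata symbols with $j\ge 1$, not merely the standing assumption $\sigma_0(D)=G(\sigma_0(D))$ on $U$.
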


\begin{proof} Indeed, by \cite{NaSaSt3},~Proposition 2.2)
compact fiber variation property is valid, provided that the restriction of the symbol
  $\sigma_{j-1}({D})[G \sigma_{j-1}({D})]^{-1}$ to the boundary
$\partial T^*\cM_{j-1}$ does not depend on covariables.
However, by the $G$-invariance  \eqref{symmetry} of symbol $\sigma_{j-1}(D)$, this expression is actually equal to the identity
symbol.
\end{proof}

On the other hand, the symbol \eqref{simba} is constant on the boundary $\partial
T^*\cM_j$, where it consists of identity operators (this time, by
$G$-invariance of $\sigma_j(D)$).

Thus, the elliptic symbol~\eqref{simba} has compact fiber variation and is equal to identity
on the boundary of  $T^*\cM_j\simeq T^*M_j$. This implies that this symbol on
 $T^*M_j$ can be considered as an operator-valued symbol in the sense of Luke~\cite{Luk1}
and we can assign to it a Fredholm operator on $M_j^\circ$, which is isomorphism at infinity. Denote this operator
by
\begin{equation}\label{operator-stratum}
   {\rm Op}\bigl(\sigma_j({D})[G \sigma_j({D})]^{-1}\bigr): L^2(M^\circ_j,L^2(K_{\Omega_j},F))\lra
   L^2( M_j^\circ,L^2(K_{\Omega_j},F)).
\end{equation}

\begin{remark}
The index theorem for $\psi$DO with operator-valued symbols \cite{Luk1} gives equality
$$
\ind {\rm Op}\bigl(\sigma_j({D})(G
\sigma_j({D}))^{-1}\bigr)=p_!\bigl[\sigma_j({D})(G \sigma_j({D}))^{-1}\bigr],
$$
where
$$
 [\sigma_j({D})(G \sigma_j({D}))^{-1}]\in K^0_c(T^*M_j^\circ)
$$
is the class of symbol in $K$-theory, and
$$
 p_!:K^0_c(T^*M_j^\circ)\to K^0(pt)=\mathbb{Z}
$$
is the direct image mapping in $K$-theory  induced by the projection $M_j\to \{pt\}$ to the one-point space.
Index formulas in cohomology can also be obtained (see~\cite{Roz3}).
\end{remark}

\section{Index theorem}

The next theorem belongs to B.Yu.~Sternin.
\begin{theorem}\label{main-th}
Suppose that   an elliptic operator ${D}$ on a stratified manifold
$\cM$ is $G$-invariant, and the admissible endomorphism $h:T^*\cM\lra T^*\cM$ is an
orientation-reversing involution \rom($h^2=Id$\rom).
Then one has
\begin{equation}
\label{stratformula1}\ind {D}=\frac 1 2\ind_t (D\cup D)+\frac 1 2\sum_{j=1}^{r}
 \ind {\rm Op}\Bigl(\sigma_j( {D})[G\sigma_j( {D})]^{-1}\Bigr),
\end{equation}
where the sum contains indices of elliptic operators on the strata $M_j$
with operator-valued symbols in the sense of~\emph{\cite{Luk1}} equal to
$\sigma_j({D})[G\sigma_j({D})]^{-1}$\rom(see Section~\ref{op-strata}\rom).
\end{theorem}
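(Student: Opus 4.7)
The strategy is to double the problem: form $\tilde D = D \cup D$ on $\cN = \cM \sqcup \cM$ as in Section~6.1, with $U_- = g(U_+)$. Clearly $\ind \tilde D = 2 \ind D$, so the task reduces to computing $\ind \tilde D$. A direct calculation (using the WLOG assumption $\sigma_0(D) = G \sigma_0(D)$ on $U$, the involution property $h^2 = Id$, and the bundle identifications $e,f$) shows that the interior symbol of $\tilde D$ satisfies condition \eqref{condition-symmetry} throughout $U$. Consequently the topological index $\ind_t(D \cup D)$ is well defined. However, on the lower strata $\cM_j$, $j \geq 1$, the corresponding identity $\sigma_j(\tilde D|_{U_-}) = h_j^* \sigma_j(\tilde D|_{U_+})$ is guaranteed only on the boundary $\partial T^*\cM_j$, because $G$-invariance of $\sigma_j(D)$ is imposed only there. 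Thus Theorem~\ref{th81} does not yet apply to $\tilde D$ directly.

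The idea is to correct $\tilde D$ stratum by stratum. For each $j \geq 1$, the symbol $\sigma_j(D)[G\sigma_j(D)]^{-1}$ equals the identity on $\partial T^*\cM_j$ and has compact fiber variation by the lemma in Section~6.2, so it defines the Luke-type operator $\mathrm{Op}\bigl(\sigma_j(D)[G\sigma_j(D)]^{-1}\bigr)$ from \eqref{operator-stratum}. Using the homotopy classification of elliptic operators on stratified manifolds from \cite{NaSaSt3}, one constructs on \emph{one} copy of $\cM$ inside $\cN$ an admissible elliptic operator $R_j$ with trivial interior symbol whose $j$-th stratum symbol is precisely $\sigma_j(D)[G\sigma_j(D)]^{-1}$; by construction
$$
\ind R_j = \ind \mathrm{Op}\bigl(\sigma_j(D)[G\sigma_j(D)]^{-1}\bigr).
$$
Extending by the identity on the second copy, one may arrange the $R_j$ (treating the deepest stratum first and working outward) so that the composition $\hat D = \tilde D \cdot R_1^{-1} R_2^{-1} \cdots R_r^{-1}$ is an admissible elliptic operator with the same interior symbol as $\tilde D$ but now satisfying \eqref{condition-symmetry} on every stratum.

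Theorem~\ref{th81} then applies to $\hat D$ and gives $\ind \hat D = \ind_t \hat D = \ind_t(D \cup D)$, the last equality because each $R_j$ was built with trivial interior symbol. Multiplicativity of the index yields
$$
\ind \tilde D = \ind \hat D + \sum_{j=1}^{r} \ind R_j = \ind_t(D \cup D) + \sum_{j=1}^{r} \ind \mathrm{Op}\bigl(\sigma_j(D)[G\sigma_j(D)]^{-1}\bigr).
$$
Dividing by $2$ produces the index formula \eqref{stratformula1}.

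The main obstacle will be the construction of the correcting operators $R_j$: one must verify, using \cite{NaSaSt3}, that any admissible symbol which is the identity on $\partial T^*\cM_j$ can be realized by a bona fide elliptic admissible operator on $\cN$ whose index is the Luke index of its symbol, and that the compositions can be performed in the correct order so that each correction restores condition \eqref{condition-symmetry} on its own stratum without spoiling the symmetry on the strata already fixed or altering the interior symbol. The inductive bookkeeping (from deepest stratum outward) and the careful use of the compatibility conditions between symbols on adjacent strata form the technical heart of the proof.
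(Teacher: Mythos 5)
Your proposal is correct and follows essentially the same route as the paper: your corrected operator $\hat{D}=(D\cup D)\,R_1^{-1}\cdots R_r^{-1}$ is exactly the paper's $D\cup\widetilde{D}$, where $\widetilde{D}$ is the operator with symbol $(\sigma_0(D),G\sigma_1(D),\ldots,G\sigma_r(D))$, and your correcting operators $R_j$ play the role of the paper's factors $P_j$ in the decomposition of $D\widetilde{D}^{-1}$, whose indices are identified with the Luke indices $\ind{\rm Op}\bigl(\sigma_j(D)[G\sigma_j(D)]^{-1}\bigr)$ via \cite{NaSaSt3}. The paper merely packages this as two index identities on a single copy of $\cM$ (the logarithmic property applied to $D\widetilde{D}^{-1}$, and Theorem~\ref{th81} applied to $D\cup\widetilde{D}$ together with the fact that $\ind_t$ depends only on the interior symbol), which are then added -- a repackaging of your argument.
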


\begin{remark}
When $r=1$, $g=Id$,$e^2=f^2=Id$, $h^2=Id$,
this theorem gives index formula on manifolds with edges, see~\cite{NSScS15,NSScS99}.
\end{remark}

\begin{proof}

1. Denote by $\widetilde{D}$ an operator with symbol
$$
 (\sigma_0(D),G \sigma_1(D),...,G \sigma_r(D)).
$$
This operator is well-defined, since the collection
$$
\sigma_0(D)=G\sigma_0(D),G
\sigma_1(D),...,G \sigma_r(D)
$$
of symbols on the strata is compatible (because the action of symmetry $G$ preserves compatibility). By the logarithmic property of
the index we obtain
\begin{equation}\label{first-equation0}
\ind D-\ind \widetilde{D}=\ind (D\widetilde{D}^{-1}).
\end{equation}
Further, for all $j\ge 1$ the symbols $\sigma_j$ of operator $D\widetilde{D}^{-1}$
are equal to identity on  $\partial T^*\cM_j$, while the interior symbol is equal to the identity
on the entire space $T^*\cM$. This implies that the operator $D\widetilde{D}^{-1}$ can be decomposed
(modulo compact operators) as the product
$$
D\widetilde{D}^{-1}=\prod_{j=1}^r P_j
$$
where $P_j$ is an operator on $\cM$, whose symbols are equal to identity, except the symbol
  $\sigma_j(P_j)$, which is equal to $\sigma_j(D) [G
\sigma_j(D)]^{-1}$. We get
$$
\ind D\widetilde{D}^{-1}=\sum_{j=1}^r \ind P_j.
$$
Note now that it follows from the properties of  the symbol of ${P}_j$
that outside arbitrarily small neighborhood $U$ of open stratum
 $\cM_j^\circ$ operator $P_j$ is equal to identity modulo compact operators. Thus,
  ${P}_j$ is equal (modulo compact operators) to the direct sum of its restriction $\Pi P_j\Pi$ to
$U$ ($\Pi$ is the characteristic function of $U$) and the identity operator,
which acts on functions on the complement of $U$. Hence, we obtain
$$
\ind P_j=\ind \Pi P_j\Pi+\ind (1-\Pi)=\ind  \Pi P_j\Pi.
$$
Let us now choose  $U$ such that it fibers over the stratum
$\cM^\circ_j$ with conical fiber. In this case the restriction $\Pi P_j\Pi$ of operator
$P_j$ to this neighborhood can be treated (see \cite{NaSaSt3})
as an operator on $\cM^\circ_j$ with operator-valued symbol in the sense of Luke
equal to $\sigma_j(\Pi
P_j\Pi)=\sigma_j({D}) [G \sigma_j({D})]^{-1}$. This gives us
\begin{equation}\label{first-equation}
    \ind D-\ind \widetilde{D}=\sum_{j=1}^r \ind \Pi P_j\Pi=\sum_{j=1}^{r}
 \ind {\rm Op}\Bigl(\sigma_j( {D})[G\sigma_j( {D})]^{-1}\Bigr).
\end{equation}
The right-hand side of this equality coincides with the sum in~\eqref{stratformula1}.

2. Consider now two copies of manifold $\cM$. We take operator
  $D$ on the first copy, and  $\widetilde{D}$ on the second copy and apply Theorem~\ref{th81}
to operator  $D\cup \widetilde{D}$ on the union of these manifolds. We obtain
\begin{equation}\label{second-equation}
\ind D+\ind \widetilde{D}= \ind_t (D\cup \widetilde{D}).
\end{equation}
Since the interior symbols of operators $D\cup \widetilde{D}$ and $D\cup D$ are equal, we get
$\ind_t (D\cup \widetilde{D})=\ind_t (D\cup D)$.
Thus, the sum of equations
\eqref{first-equation} and \eqref{second-equation} gives the desired formula
\eqref{stratformula1}.
\end{proof}

\section{Appendix.  Actions of symmetries in $K$-theory}

Consider the ideal $J\subset \Psi(\cM)$ of operators with zero interior symbol.

Let $h\in\End(T^*\cM)$ be an admissible endomorphism. The action
of this endomorphism on principal symbols obviously restricts to the action on the ideal
$J/\mathcal{K}\subset \Psi(\cM)/\mathcal{K}$ in the Calkin algebra. Below we shall show that the induced
action
$$
h_*:K_*(J/\mathcal{K})\lra K_*(J/\mathcal{K})
$$
on the rational $K$-group is equal to $\pm Id$.

To this end we introduce notation $K_*(J)_\mathbb{C}=K_*(J)\otimes\mathbb{C}$ and define the
\emph{sign} of $h$ by
$$
\sgn h=\frac{h_*[T^*\cM_1^\circ]}{[T^*\cM_1^\circ]}\in\{\pm 1\},
$$
where $[T^*\cM_1^\circ]\in H_{ev,c}(T^*\cM_1^\circ) \text{ is the fundamental class.} $
Therefore, the sign $\sgn h$ is equal to $+1$, if $h$ preserves  orientation of $T^*\cM_1$ and is $-1$ otherwise.

\begin{proposition}\label{devet1}
Suppose that an admissible endomorphism $h\in\End(T^*\cM)$ has finite order
\rom($h^N=1$\rom). Then one has
\begin{equation}\label{main-lemma}
    h_*=(\sgn h)Id:K_*(J/\mathcal{K})_\mathbb{C}\lra K_*(J/\mathcal{K})_\mathbb{C}.
\end{equation}
\end{proposition}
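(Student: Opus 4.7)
My plan is to filter the ideal $J/\mathcal{K}$ by the depth of support in the stratification, reduce on each successive subquotient to a statement about a fiberwise-linear vector bundle automorphism, and assemble the result via the five-lemma.

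Concretely, I would first introduce the decreasing filtration
\begin{equation*}
J = J^{(1)} \supset J^{(2)} \supset \cdots \supset J^{(r)} \supset J^{(r+1)} = \mathcal{K},
\end{equation*}
where $J^{(k)}$ consists of operators in $J$ whose symbols on the strata $\cM_1,\ldots,\cM_{k-1}$ vanish. The block decomposition \eqref{hhh1} shows that $h$ preserves each $J^{(k)}$, and hence descends to each subquotient. Up to the standard homogeneity and compatibility conditions, $J^{(k)}/J^{(k+1)}$ is the C*-algebra of ``pure'' symbols supported on $\cM_k$: operator-valued functions on $T^*\cM_k^\circ \setminus 0$, homogeneous of degree zero, with values in operators on the model cone $K_{\Omega_k}$.

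Next, by a K\"unneth-type argument for this bundle-like C*-algebra I would identify
\begin{equation*}
K_*(J^{(k)}/J^{(k+1)}) \otimes \mathbb{C} \simeq K^*_c(T^*\cM_k^\circ)_{\mathbb{C}} \otimes K_*(\mathcal{A}_k)_{\mathbb{C}},
\end{equation*}
where $\mathcal{A}_k$ is the Calkin algebra on the cone fiber. Under this identification, the induced action of $h$ is $h_k^* \otimes Id$, because admissibility forces $h$ to act trivially on the $T^*\Omega_k \oplus \mathbb{R}$ summands. A standard vector-bundle lemma---for a finite-order fiberwise-linear automorphism of a real vector bundle, the induced endomorphism of $K^*_c$ tensored with $\mathbb{C}$ equals its orientation sign times the identity---then gives the scalar action on each subquotient. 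This lemma is obtained by passing through the Chern character isomorphism $K^*_c \otimes \mathbb{C} \simeq H^{*}_c(\cdot;\mathbb{C})$ and combining it with the Thom isomorphism, under which a linear fiber automorphism acts on the Thom class by its orientation sign. Iterating the block form \eqref{hhh1} shows that $\sgn h_k = \sgn h_1 = \sgn h$ for every $k$, so the scalar is constant across the filtration.

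Finally, I would apply the five-lemma to the long exact sequences associated with
\begin{equation*}
0 \lra J^{(k+1)}/\mathcal{K} \lra J^{(k)}/\mathcal{K} \lra J^{(k)}/J^{(k+1)} \lra 0,
\end{equation*}
and induct downward on $k = r, r-1, \ldots, 1$, so that the subquotient computation propagates to the identity $h_* = \sgn(h) \cdot Id$ on all of $K_*(J/\mathcal{K})_{\mathbb{C}}$. I expect the principal obstacle to be the tensor-product decomposition of the subquotients together with the naturality of the K\"unneth isomorphism under $h$; this requires a careful analysis of the local symbol model near each stratum, for which I would rely on the structure theorems of \cite{NaSaSt3,NaSaSt5}. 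The finite-order hypothesis on $h$ enters in the vector-bundle lemma, where it guarantees semisimple decomposition of the induced $\mathbb{Z}/N$-action over $\mathbb{C}$, allowing the cohomological computation to transfer to K-theory.
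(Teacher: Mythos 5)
Your overall route is the same as the paper's: filter $J$ by the strata on which symbols vanish, identify each subquotient with a topological $K$-group of the cotangent bundle of a stratum, compute the action there via the Chern character, and propagate through the exact sequences by induction. (For the subquotients you do not need the K\"unneth-type decomposition with the cone-fiber Calkin algebra: the isomorphism $K_*(J_{j}/J_{j+1})\simeq K^{*+1}_c(T^*\cM_{j+1})$, compatible with the action of $h$, is already available from \cite{NaSaSt3} and is what the paper quotes.) However, there is a genuine gap in your assembly step. The five-lemma applied to the long exact sequences only tells you that the middle map $h_*$ on $K_*(J^{(k)}/\mathcal{K})_\mathbb{C}$ is an \emph{isomorphism} --- which is automatic anyway, since $h^*$ is an algebra automorphism of $J/\mathcal{K}$ --- and not that it equals the specific scalar $(\sgn h)Id$. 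Knowing that a map is $(\sgn h)Id$ on a subobject and on the quotient does not determine it on the extension: an off-diagonal (unipotent) part can survive, and no form of the five-lemma excludes it.

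This is exactly where the finite-order hypothesis must be used, and your proposal uses it in the wrong place. The fiberwise vector-bundle lemma needs no finite order at all: for a fiberwise-linear automorphism covering the identity on the base, pairing with the fundamental class of $T^*\cM_{j+1}^\circ$ (equivalently, the action on the Thom class) gives $h_{j+1}^*=(\sgn h)Id$ on $H^*_c(T^*\cM_{j+1}^\circ)_\mathbb{C}$ directly, and the Chern character transports this to $K^*_c$ without any semisimplicity input. The place where $h^N=Id$ is indispensable is the inductive step: from the commutative diagram with exact rows
$$
\begin{array}{ccccc}
K_*(J_{j+1}/\mathcal{K})_\mathbb{C}& \longrightarrow & K_*(J_{j}/\mathcal{K})_\mathbb{C} & \longrightarrow & K_*(J_{j}/J_{j+1})_\mathbb{C}
\end{array}
$$
with $(\sgn h)h_*$ equal to the identity on the outer terms, a diagram chase (using exactness at the middle term) gives only $\bigl((\sgn h)h_*-Id\bigr)^2=0$ on the middle term. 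One then needs $h^N=Id$, hence diagonalizability of $h_*$ over $\mathbb{C}$, to conclude that the nilpotent part vanishes and $(\sgn h)h_*=Id$. Add this algebraic lemma in place of the five-lemma and the argument closes, matching the paper's proof.
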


\begin{proof}

Consider a decreasing sequence of ideals
$$
J=J_0\supset J_1\supset J_2\supset\ldots \supset J_r=\mathcal{K},
$$
where $r$ is the length of stratification, and the ideal $J_j$ consists of operators $D$ with
symbols $\sigma_k(D)$, which are equal to zero for  $k\le j.$ There are induced actions of $h$
  on the ideal $J_j/\mathcal{K}$ and the quotient $J_j/J_{j+1}$.

Let us prove by induction that the mapping
$$
h_*:K_*(J_j/\mathcal{K})_\mathbb{C}\lra K_*(J_j/\mathcal{K})_\mathbb{C}
$$
is equal to $(\sgn h)Id.$

1. Base of induction $j=r$. In this case the proposition is valid, since
$J_r=\mathcal{K}$ and $K_*(J_r/\mathcal{K})=0$.

2. Inductive step. Let $h_*=(\sgn h)Id$ as an endomorphism of the group
$K_*(J_{j+1}/\mathcal{K})_\mathbb{C}$. Let us prove that the same equality is valid for the group
 $K_*(J_{j}/\mathcal{K})_\mathbb{C}$. To this end, consider the commutative diagram
\begin{equation}\label{diag-short}
\begin{array}{ccccc}
K_*(J_{j+1}/\mathcal{K})_\mathbb{C}& \longrightarrow &
K_*(J_{j}/\mathcal{K})_\mathbb{C} & \longrightarrow &
K_*(J_{j }/J_{j+1 })_\mathbb{C}\vspace{2mm}\\
 (\sgn h)h_*\downarrow \qquad\qquad\;\;& & \qquad\qquad\downarrow (\sgn h)h_*
& &
\;\;\quad \downarrow (\sgn h)h_* \vspace{2mm}\\
K_*(J_{j+1}/\mathcal{K})_\mathbb{C}& \longrightarrow &
K_*(J_{j}/\mathcal{K})_\mathbb{C} & \longrightarrow & K_*(J_{j }/J_{j+1
})_\mathbb{C},
\end{array}
\end{equation}
where the  rows are two identical copies of the $K$-theory exact sequence of the pair
$J_{j+1}/\mathcal{K}\subset J_j/\mathcal{K}$.

\begin{lemma}\label{lemochka}
The mapping
$$
(\sgn h)h_*:K_*(J_{j }/J_{j+1 })_\mathbb{C}\lra K_*(J_{j }/J_{j+1 })_\mathbb{C}
$$
is equal to the identity.
\end{lemma}
\begin{proof}
1. In~\cite{NaSaSt3} the following isomorphism was obtained
\begin{equation}\label{iso-smooth}
K_*(J_{j }/J_{j+1 })\simeq K^{*+1}_c(T^*\cM_{j+1}),
\end{equation}
where the action $h_*$ on  the $K$-group of algebra in the left-hand side in \eqref{iso-smooth}
transforms to the action of endomorphism
 $h_{j+1}\in \End(T^*\cM_{j+1})$ on the topological  $K$-group in the right-hand side of the equality.

2. Since $\sgn h=\sgn h_{j+1}$ (which is easy to see, because $h$ is admissible),
to prove lemma, it suffices to show that the mapping
$$
h_{j+1}^*:K^{*}_c(T^*\cM_{j+1})_\mathbb{C}\lra K^{*}_c(T^*\cM_{j+1})_\mathbb{C}
$$
is equal to $(\sgn h)Id$.

To prove this, we use isomorphisms
$$
 K^*_c(T^*\cM_{j+1})_\mathbb{C}\stackrel{\rm ch}\simeq H^*_c(T^*\cM_{j+1})_\mathbb{C}\simeq
 \Hom(H^*_c(\cM_{j+1}^\circ),\mathbb{C})
$$
(Chern character isomorphism and Poincare isomorphism --- integration over
fundamental cycle $[T^*\cM_{j+1}]$), to transform the proof that
$h_{j+1}^*=(\sgn h)Id$ from $K$-theory to cohomology. For all $x\in
H^*_c(T^*\cM_{j+1})$ and $y\in H^*_c(\cM_{j+1}^\circ)$ we have
\begin{multline}\label{atisi}
\langle h_{j+1}^*x, y\rangle\equiv \langle (h_{j+1}^*x)
y,[T^*\cM_{j+1}^\circ]\rangle=\langle
h_{j+1}^*(xy),[T^*\cM_{j+1}^\circ]\rangle=\vspace{2mm}\\
=  \langle xy,(h_{j+1})_*[T^*\cM_{j+1}^\circ]\rangle= \langle x y,(\sgn
h)[T^*\cM_{j+1}^\circ]\rangle=(\sgn h)\langle x,y\rangle
\end{multline}
(here  ``$\langle \cdot ,\cdot\rangle$''  denotes pairing in cohomology, and we used
equality  $h^*_{j+1}(y)=y$, which is valid, because $y$
is a cohomology class on the base $\cM^\circ_{j+1}$). Thus, we obtain
\begin{equation}\label{atisi-2}
   \langle h_{j+1}^*x, y\rangle=(\sgn h)\langle x,y\rangle
\end{equation}
for all $x\in H^*_c(T^*\cM_{j+1})$ and $ y\in H^*_c(\cM_{j+1}^\circ)$.
Since equality \eqref{atisi-2} is valid for all $y$, we obtain the desired equality $h_{j+1}^*x=(\sgn h)x$.
\end{proof}

Indeed, mappings in the leftmost and rightmost columns of the commutative diagram
\eqref{diag-short} are identity mappings (the left mapping is identity by assumption, while the right mapping by
Lemma~\ref{lemochka}). Therefore, application of the following algebraic Lemma~\ref{lemma-simple}
shows that the mapping $(\sgn h)h_*$ in the middle column of the diagram~\eqref{diag-short} is also identity.
\begin{lemma}\label{lemma-simple}
Consider commutative diagram
$$
\begin{array}{ccccc}
A& \longrightarrow & B & \longrightarrow &C\\
Id\downarrow \quad\;\;& & \quad\downarrow h &  &\;\;\quad\downarrow Id\\
A& \longrightarrow & B & \longrightarrow &C
\end{array}
$$
of finite-dimensional vector spaces and linear mappings with exact rows.
If mapping $h$ has finite order \rom($h^N=Id$\rom), then
 $h=Id$.
\end{lemma}
\begin{proof}
Diagram chase gives equality $(h-Id)^2=0$. This means that $h=Id+e$,
where $e^2=0$. Now we use finite order condition and obtain that $h$ is diagonalized in some base.
This implies that   $e=0$.
\end{proof}

\end{proof}

\begin{corollary}\label{cor-index}
Under assumptions of Proposition~\emph{\ref{devet1}} for arbitrary elliptic operator ${D}$ on $\cM$
with interior symbol, which does not depend on covariables, one has:
$$
\ind (h^* D)=(\sgn h)\ind  {D},
$$
where $h^* D$ stands for an arbitrary elliptic operator with principal symbol $h^*(\si(D))$.
\end{corollary}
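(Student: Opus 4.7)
The plan is to deduce the index identity directly from Proposition~\ref{devet1} once the symbol class of $D$ is realized as a class in $K_1(J/\mathcal{K})$. Since the index depends only on the symbol class modulo compact operators, it descends to a homomorphism on the relevant $K$-group, and the corollary will follow by applying the rational equality $h_*=(\sgn h)Id$ from the proposition, then observing that both sides of the claimed identity are integers.

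First I would reduce to the case $\sigma_0(D)=Id$. By hypothesis, $\sigma_0(D)$ is a covariable-independent bundle isomorphism $E\to F$ on $\cM^\circ$; it extends to a bundle isomorphism $\alpha:E\to F$ on the blowup $M$. Let $M_\alpha$ denote the associated operator of multiplication by $\alpha$; it is elliptic and invertible, hence has zero index. Replacing $D$ by $D':=M_\alpha^{-1}D$ gives an elliptic operator with $\sigma_0(D')=Id$ and $\ind D'=\ind D$. Because $\sigma(M_\alpha)$ is covariable-independent, admissibility of $h$ forces $h^*\sigma(M_\alpha)=\sigma(M_\alpha)$; hence a representative of $h^*D$ can be chosen as $M_\alpha\cdot(h^*D')$ modulo compact operators, so $\ind(h^*D)=\ind(h^*D')$, and it suffices to establish the corollary for $D'$.

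After this reduction $\sigma(D')$ is an invertible element of the unitization of $J/\mathcal{K}$, defining a class $[\sigma(D')]\in K_1(J/\mathcal{K})$, and $[\sigma(h^*D')]=h_*[\sigma(D')]$. Proposition~\ref{devet1} then gives
$$
h_*[\sigma(D')]=(\sgn h)[\sigma(D')]\quad\text{in }K_1(J/\mathcal{K})_{\mathbb{C}}.
$$
Applying the index homomorphism $K_1(J/\mathcal{K})\to\mathbb{Z}$ yields $\ind(h^*D')=(\sgn h)\ind D'$ in $\mathbb{C}$, and therefore in $\mathbb{Z}$ because both sides are integers.

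The main obstacle will be Step~1, namely arranging the bundle isomorphism $\alpha$ and the operator $M_\alpha$ so that the factorization $h^*D\equiv M_\alpha\cdot(h^*D')$ holds modulo compact operators across all strata. The key observation is that an admissible endomorphism fixes any symbol that is fiberwise constant along the cotangent direction, which handles both the $h^*$-invariance of $\sigma(M_\alpha)$ and the compatibility of the factorization near the boundary; once this is checked, the conclusion is immediate from Proposition~\ref{devet1}.
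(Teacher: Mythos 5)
Your proof is correct and follows essentially the same route as the paper: the index is viewed as a functional on $K_1(J/\mathcal{K})_{\mathbb{C}}$ and Proposition~\ref{devet1} ($h_*=(\sgn h)Id$) is applied to the symbol class. Your preliminary reduction to $\sigma_0(D')=Id$ via the invertible multiplication operator $M_\alpha$ merely makes explicit how the covariable-independent interior symbol allows $\sigma(D)$ to define a class in $K_1(J/\mathcal{K})$, a point the paper's one-line proof leaves implicit.
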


\begin{proof}
The index in this situation can be considered as a functional
$$
\begin{array}{ccc}
  K_1(J/\mathcal{K})_\mathbb{C} & \lra & \mathbb{C} \\
  \sigma\in {\rm Mat}_N(J/\mathcal{K}) & \longmapsto & \ind {\rm Op}(\sigma). \\
\end{array}
$$

By Proposition~\ref{devet1} one has $h_*[\sigma(D)]=(\sgn h)[\sigma(D)]$. Therefore,
we obtain the desired equality
$$
\ind h^*D=(\sgn h)\ind D.
$$
\end{proof}



\begin{thebibliography}{10}

\bibitem{PlSe6}
B.~A. Plamenevsky and V.~N. Senichkin.
\newblock Representations of {$C\sp *$}-algebras of pseudodifferential
  operators on piecewise-smooth manifolds.
\newblock {\em Algebra i Analiz}, {\bf 13}, No. 6, 2001,  124--174.

\bibitem{Nis99}
V.~Nistor.
\newblock Pseudodifferential operators on non-compact manifolds and analysis on
  polyhedral domains.
\newblock In {\em Spectral geometry of manifolds with boundary and
  decomposition of manifolds}, volume 366 of {\em Contemp. Math.}, 2005, pages
  307--328. Amer. Math. Soc., Providence, RI.

\bibitem{NaSaSt4}
V.~Nazaikinskii, A.~Savin, and B.~Sternin.
\newblock Pseudodifferential operators on stratified manifolds {I}.
\newblock {\em Differential Equations}, {\bf 43}, No. 4, 2007,  536--549.

\bibitem{NaSaSt5}
V.~Nazaikinskii, A.~Savin, and B.~Sternin.
\newblock Pseudodifferential operators on stratified manifolds {II}.
\newblock {\em Differential Equations}, {\bf 43}, No. 5, 2007,  704--716.

\bibitem{Schu17}
B.-W. Schulze.
\newblock {Pseudo-differential calculus on manifolds with geometric
  singularities.}
\newblock In {\em Pseudo-differential operators. Partial differential equations
  and time-frequency analysis. Selected papers of the ISAAC workshop, Toronto,
  Canada, December 11--15, 2006.}, volume~52 of {\em The Fields Institute for
  Research in Mathematical Sciences. Fields Institute Communications}, 2007,
  pages 37--83. Providence.

\bibitem{ScSS11}
B.-W. Schulze, B.~Sternin, and V.~Shatalov.
\newblock On the index of differential operators on manifolds with conical
  singularities.
\newblock {\em Annals of Global Analysis and Geometry}, {\bf 16}, No. 2, 1998,
  141--172.

\bibitem{NSScS15}
V.~Nazaikinskii, A.~Savin, B.-W. Schulze, and B.~Sternin.
\newblock On the index of differential operators on manifolds with edges.
\newblock {\em Sbornik: Mathematics}, {\bf 196}, No. 9, 2005,  1271--1305.

\bibitem{NSScS99}
V.~Nazaikinskii, A.~Savin, B.-W. Schulze, and B.~Sternin.
\newblock {\em Elliptic Theory on Singular Manifolds}.
\newblock CRC-Press, Boca Raton, 2005.

\bibitem{NaSaSt3}
V.~Nazaikinskii, A.~Savin, and B.~Sternin.
\newblock On the homotopy classification of elliptic operators on stratified
  manifolds.
\newblock {\em Izv. Math.}, {\bf 71}, No. 6, 2007,  1167--1192.

\bibitem{NaSt7}
V.~Nazaikinskii and B.~Yu. Sternin.
\newblock On the local index principle in elliptic theory.
\newblock {\em Funkt. Anal. Appl.}, {\bf 35}, No. 2, 2001,  111--123.

\bibitem{AtBo2}
M.~F. Atiyah and R.~Bott.
\newblock The index problem for manifolds with boundary.
\newblock In {\em Bombay Colloquium on Differential Analysis}, 1964, pages
  175--186, Oxford. Oxford University Press.

\bibitem{SaSt17}
A.~Yu. Savin and B.~Yu. Sternin.
\newblock Index for a class of nonlocal elliptic operators.
\newblock In {\em Proceedings of Crimean Autumn Mathematical School 2003},
  2004.

\bibitem{SaSt10}
A.~Savin and B.~Sternin.
\newblock Index defects in the theory of nonlocal boundary value problems and
  the $\eta$-invariant.
\newblock {\em Sbornik:Mathematics}, {\bf 195}, No. 9, 2004.
\newblock arXiv: math/0108107.

\bibitem{Luk1}
G.~Luke.
\newblock Pseudodifferential operators on {H}ilbert bundles.
\newblock {\em J. Diff. Equations}, {\bf 12}, 1972,  566--589.

\bibitem{Roz3}
G.~Rozenblum.
\newblock Regularisation of secondary characteristic classes and unusual index
  formulas for operator-valued symbols.
\newblock In {\em Nonlinear hyperbolic equations, spectral theory, and wavelet
  transformations}, volume 145 of {\em Oper. Theory Adv. Appl.}, 2003, pages
  419--437. Birkh\"auser, Basel.

\end{thebibliography}

\end{document}